\documentclass[11pt]{amsart}
\usepackage{amsmath,amsthm,amsfonts,amssymb,amscd}
\usepackage[dvips]{graphicx}
\usepackage{psfrag}
\usepackage{a4wide}

\theoremstyle{plain}
\newtheorem{thm}{Theorem}[section]
\newtheorem{prop}[thm]{Proposition}
\newtheorem{lemma}[thm]{Lemma}
\newtheorem{clly}[thm]{Corollary}

\theoremstyle{definition}
\newtheorem{remark}[thm]{Remark}
\newtheorem{example}[thm]{Example}
\newtheorem{defi}[thm]{Definition}

\setlength\oddsidemargin{1.1cm}
\setlength\topmargin{0cm}

\setlength\footskip{2cm}

\newcommand{\NN}{{\mathbb N}}
\newcommand{\ZZ}{{\mathbb Z}}

\renewcommand{\epsilon}{\varepsilon}

\newcommand{\diam}{\operatorname{diam}}

\newcommand{\Orb}{\operatorname{Orb}}

\def \NN {{\mathbb N}}

\def \ZZ {{\mathbb Z}}

\begin{document}

\title[ $CW$-expansiveness and specification for set-valued functions]
{Continuum-wise expansiveness and specification for set-valued functions and topological entropy}

\author{
Welington Cordeiro
and Maria Jos\'e Pac\'ifico 
 }
 \thanks{M.J.P. were partially supported by CNPq,
  PRONEX-Dyn.Syst., FAPERJ. W. C. were partially supported by CNPq.}

\maketitle

\begin{abstract}{We define the concept of continuum wise expansive for set-valued functions and prove that if a compact metric space admit a set-valued $cw$-expansive function then the topological entropy of $X$ is positive.} We also introduce the notion of pointwise specification property for set-valued functions and prove that set-valued functions with this property has positive entropy.
\end{abstract}

{\tiny AMS Classification: 37A35, 37B40, 37D45, 54C60} 

{\tiny Key words: continuum-wise expansiveness, set-valued functions, entropy, specification property }
\section{Introduction}\label{sec:intro} 

The notion of expansiveness was introduced in the middle of the twentieth century in \cite{U}. Expansiveness is a property shared by a large class of dynamical systems exhibiting chaotic behavior. 
In \cite{Ma} Ma\~n\'e  proved that if a compact metric space
admits an expansive homeomorphism then its topological dimension
is finite. In \cite{Fa}  Fathi proved that if the topological dimension is greater than $1$ then the topological entropy of all expansive homeomorphism is positive. 
In the 90s, Kato introduced the notion of \emph{continuum-wise expansiveness} for homeomorphisms \cite{K1}, and extended both results 
(Ma\~n\'e and Fathi) to continuum-wise expansive homeomorphisms.

In \cite{BW} 
this concept is given for flows and it is proved that some properties valid for discrete dynamics are also valid for flows. 
Using this definition, Keynes and Sears \cite{KS} extend the results 
of Ma\~n\'e  for expansive flows. They proved that if a compact metric
space admits an expansive flow then its topological dimension is finite.
They also proved that expansive flows on manifolds with topological
dimension greater than $1$ have positive entropy.

In \cite{ACP} it was introduced the notion of \emph{positively continuum-wise expansiveness} for flows and the authors proved that $cw$-expansive  continuos flows on compact metric spaces with
topological dimension greater than $1$ have positive entropy, extending to continuos dynamics the result of Keynes and Sears.

In view of so many interesting results relating entropy to the chaotic dynamics of single-valued functions, it is natural to ask if the concept of entropy extends to set-valued functions and if so, if it gives nice informations about the dynamics of the set-valued function.

In \cite{Za} the author studied the asymptotic behavior of the trajectories of a set-valued dynamical system. In particular, he gived necessary and sufficient conditions for the existence of global attractors for dispersive systems as well as motivations coming from economic models.

Recently, in \cite{CMM}, the authors introduced two kinds of entropy, that they call separated and spanning entropy, 
 given by separated and spanning sets respectively. They prove that they keep a number of well-known properties of the topological entropy of single-valued maps.
 
Also recently, in  \cite{KT} the authors  introduced the notion of entropy that resemble  the classical notion of topological entropy of single-valued maps, and show that the entropy of a set-valued function is equal to the topological entropy of the shift map on its orbit spaces. Moreover, they give sufficient conditions for a set-valued function to have positive or infinite topological entropy. 

Finally, in \cite{RT}, the authors extend the notion of the specification property from the usual single-valued function setting to the setting of set-valued mappings and prove that specification implies topological mixing and positive entropy.

The aim of this paper is to introduce the notion of pointwise specification property and continuum-wise expansivity for set-valued functions. We extend \cite[Theorem 5.8]{K1} to this context and  give sufficient conditions to a continuum-wise expansive set-valued function have positive entropy. We compare separated and spanning entropy defined in \cite{CMM} with entropy defined in \cite{KT}. We also introduce the notion of point-wise specification for set-valued functions and prove that set-valued functions with this property have positive entropy and also positive separated entropy. 
We point out that there are set-valued functions satisfying the specification property second the notion given in \cite{RT} but do not satisfy the point-wise specification property, as shown in Example \ref{ex}. Moreover, there are set-valued functions satisfying the specification property which do not  have positive separated entropy, as shown in Example~ \ref{ex-vale}.

This paper is organized as follows. In Section \ref{sec:def} we introduce the notion of continuum-wise expansive set-valued functions. 
In Section \ref{s-topo} we give sufficient conditions to a set-valued function have positive entropy, generalizing \cite[Theorem 5.8]{K1}. In Section \ref{s-sepentro}
 we exploit the notion of separated and spanning entropy and compare these kind of entropy with the entropy of a set-valued function. Finally, in Section \ref{s-strong} we introduce the notion of point-wise specification for set-valued functions and prove that set-valued functions satisfying the point-wise specification property has positive entropy.

\section{ Continuum wise expansive set-valued functions}\label{sec:def}

Given a compact metric space $X$ with the metric $d$, let $2^X$ be the set of all non-empty subsets of $X$ with Hausdorff metric $d_H$, defined by 
$$d_H(A,B)=\max\{\sup_{x\in A}\inf_{y\in B}d(x,y),\sup_{y\in B}\inf_{x\in A}d(x,y)\},$$
if $A,B\in 2^X$. Given the subsets $A$ and $B$ of $X$, recall that the distance between $A$ and $B$ is given by
$$d(A,B)=\inf\{d(x,y);x\in A \text{ and }y\in B\}.$$
Let $K(X)\subset 2^X$ be the set of closeds subsets of $X$, and $C(X)\subset K(X)$ the set of compact and connected subsets of $X$. 

We say that a function $F:X\rightarrow Y$ is a \emph{set-valued map} if $Y\subset 2^X$.
If $f : X \to Y$ is a map, we may think of $f $ as a set-valued
function by defining a function $F_{f} : X \to 2^Y$ by 
$F_f(x) = \{f(x)\}$.

For increased distinction, we will refer to a function $F : X \to  2^Y$ as a set-valued function and a continuous function $f : X \to Y $ as a mapping.

  Let $X$ be a compact Hausdorff space and  $F:X\rightarrow 2^X$  a set-valued map 
  such that $F(x)$ is a compact set for each $x\in X$. Then $F$
 is  \emph{upper semi-continuous} at $x\in X$ if for each open set $V$ containing $F(x)$ there is an open set $U$ containing $x$ such that if $y\in U$ then $F(y)\subset V$. 

If $X,Y,$ and $Z$ are compact metric spaces, $F:X\to 2^Y$
 and $G:Y \to 2^Z$, we define $G\circ F:X\to 2^Z$ by
$$
G\circ F(x)=\bigcup_{y\in F(x)} G(y).
$$
Note that if $F$ and $G$ are upper semi-continuous, then $G\circ F$ is as well. 

If $F:X\rightarrow 2^X$ satisfies $F(x)$ is a non-empty compact set for each $x\in X$ then $(X,F)$ is called a {\em{topological dynamical system}}.
We define $F^0$ to be the identity on $X$, and for each $n\in\mathbb{N}$, $F^n=F\circ F^{n-1}$. 

Next we recall the concept  of orbit through $x \in X$ for the topological dynamical system $(X,F)$. 
For this, given $x\in X$, we start defining the sets


\[
\Sigma(x)=\{\overline{x}=(...,x_{-1},x=x_0,x_1,...); x_{i+1}\in F(x_{i})\}
\]
\[
\Sigma^+(x)=\{\overline{x}=(x=x_0,x_1,x_2,...); x_{i+1}\in F(x_i)\} \ \ 
\]
\[
\Sigma^-(x)=\{\overline{x}=(x=x_0,x_1,x_2,...); x_{i}\in F(x_{i+1})\} \ \ 
\]
\[\mbox{and for $n\in\mathbb{N}$} \,\,,
\Sigma_n(x)=\{(x=x_0,...,x_{n-1}); x_{i+1}\in F(x_{i})\}.
\]

\begin{defi}
Given $x\in X$, the full orbit of $x$ is defined as
$\text{{Orb}}(x,F)=\bigcup_{x\in X}\Sigma(x),$ the forward orbit of 
$x$  as \,\,$ \overrightarrow{Orb}(x,F)=\bigcup_{x\in X}\Sigma^+(x),$ and the backward orbit of $x$  as \,\,
$ \overleftarrow{Orb}(x,F)=\bigcup_{x\in X}\Sigma^-(x)$. For $n\in \NN$, the set $ \text{{Orb}}_n(x,F)=\bigcup_{x\in X}\Sigma_n(x)$ is a finite orbit at $x$.

A full orbit $\text{{Orb}}(x,F)$ is periodic if there is $n\in\NN$ such that $x_i=x_{i+n}$ for all $i\in\ZZ$. If $\text{{Orb}}(x,F)$ is periodic, the period of $x$ is the smallest number $n\in\NN$ satisfying $x_i=x_{i+n}$ for all $i\in\ZZ$.
\end{defi}

\vspace{0.2cm}

Given a set $A\subset X$ and $n\in\NN$, we define the following orbit spaces:
\[\Orb_n(A,F)=\cup_{x\in A}\Orb(x,F),\quad \overrightarrow{\Orb}(A,F)=\cup_{x\in A}\Orb(x,F)
\]
\[\overleftarrow{\Orb}(A,F)=\cup_{x\in A}\Orb(x,F)\quad
\Orb(A,F)=\cup_{x\in A}\Orb(x,F).
\]

We endow each of these spaces with the induced product topology.

If $\mathbb{A}\in\{\mathbb{Z},\mathbb{Z}_{\geq 0}\}$ we
consider the product space $\prod_{i\in\mathbb{A}}X$ with the metric  
$\rho$ defined as
\begin{equation}\label{metricarho}\rho(\overline{x},\overline{y})=\sup_{i\in\mathbb{A}}\frac{d(x_i,y_i)}{|i|+1}.\end{equation}
For $\prod_{i=1}^nX$ we define 
\begin{equation}\label{metricadn}
D_n(\overline{x},\overline{y})=\max_{0\leq i<n}d(x_i,y_i).
\end{equation}


If $A\subset X$ define $F^1(A)=F(A)=\bigcup_{x\in A}F(x)$ and $F^n(A)=F\circ F^{n-1}(A)$. 


By a \emph{continuum}, we mean a compact connected non degenerate metric space. 

Recall that a continuous function $f:X\rightarrow X$ is  \emph{positively continuum wise-expansive} if there is $\delta$ such that if $A\subset X$ is a continuous set then there is $n\in\mathbb{N}$ with $\diam(f^n(A))>\delta$. See \cite{K1}. Next we extend this concept to set-valued functions.

\begin{defi} A topological dynamical system $(X,F)$ is \emph{positively continuum-wise expansive} if satisfies:
\begin{enumerate}
\item $F(x)$ is a compact and connected for each $x\in X$;   
\item there is $\delta>0$ such that if $A\subset X$ is a continuum then there is $n\in\mathbb{N}$ such that $\diam F^n(A)>\delta$.
\end{enumerate}
\end{defi}
\begin{remark}Let $f:X\rightarrow X$ a continuous map with $X$ a compact metric space. If we define $F_f:X\rightarrow 2^X$ by $F_f(x)=\{f(x)\}$, then $(X,F_f)$ is a topological dynamical system. By definition, a continuous map $f:X\rightarrow X$ is  positively continuum-wise expansive map if, and only if, $(X,F_f)$ is positively continuum-wise expansive. Moreover, since $f$ is continuos, $(X,F_f)$ is upper semi-continuous.
\end{remark}

\noindent {\em{Notation}}\/ For short, we denote continuum-wise expansive by $cw$-expansive.

\section{Topological entropy for $cw$-expansive set-valued functions}\label{s-topo}

In this section we introduce the notion of topological entropy for $cw$-expansive set-valued functions
and prove that  $cw$-expansive set-valued function defined in a metric space with positive topological dimension has positive topological entropy.  

We start recalling  the notion of topological entropy for set-valued functions given in \cite{RT}.  For this, let $(X,F)$ be a topological dynamical system, $n\in\mathbb{N}$ and $\epsilon>0$. Given $ A\subset X$, we denote by $\#(A)$  the cardinality of 
 $A$.

A $(n,\epsilon)$-\emph{separated} set for $(X,F)$ is a $(n,\epsilon)$-separated subset of $\Orb_n(X,F)$. Define $$s_n(\epsilon)=\sup\{\#(E); E \text{ is an } (n,\epsilon)\text{-separated set}\}.$$ 
An $(n,\epsilon)$-\emph{spanning} set for $(X,F)$ is an $(n,\epsilon)$-spanning subset of $\Orb_n(X,F)$. Define 
$$r_n(\epsilon)=\inf\{\#(E); E \text{ is an } (n,\epsilon)\text{-spanning set}\}.$$

It is proved in \cite[Theorem 2]{RT} that
$$\lim_{\epsilon\rightarrow 0}\limsup_{n\rightarrow\infty}\frac{1}{n}\log r_n(\epsilon)=\lim_{\epsilon\rightarrow 0}\limsup_{n\rightarrow\infty}\frac{1}{n}\log s_n(\epsilon).$$
And the notion of entropy in \cite{RT} is the following:

\begin{defi}\label{d.entropia} The \emph{topological entropy} of $(X,F)$ is defined by
$$h(X,F)=\lim_{\epsilon\rightarrow 0}\limsup_{n\rightarrow\infty}\frac{1}{n}\log r_n(\epsilon).
$$
\end{defi}

Given a continuum $A\subset X$, recall that $C(A)$ is the set of all continuum subsets of $A$.
\begin{lemma}\label{l2} Let $A$ be a continuum of $X$ with $\diam A>c$. Then there are  $A_1,A_2\in C(A)$ such that $d_H(A_1,A_2)>\frac{c}{8}$ and
$$\diam(A_1)=\diam(A_2)=\frac{c}{8}.$$

\end{lemma}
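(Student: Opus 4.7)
The plan is to locate two far-apart points in $A$ and carve out subcontinua of prescribed diameter around each of them; the triangle inequality will then force their Hausdorff distance to be large.

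First, since $\diam A > c$, pick $x, y \in A$ with $d(x, y) > c$. The crux of the argument is to produce a subcontinuum $A_1 \subset A$ with $x \in A_1$ and $\diam A_1 = c/8$, and analogously an $A_2 \ni y$ with $\diam A_2 = c/8$. For this I would invoke the classical fact from hyperspace theory that in a continuum $K$ and any $p \in K$ there exists an \emph{order arc} $\alpha:[0,1]\to C(K)$ with $\alpha(0)=\{p\}$, $\alpha(1)=K$, and $\alpha(s)\subset\alpha(t)$ for $s\le t$ (see, e.g., Nadler's monograph on hyperspaces of sets). Since the diameter function is continuous on $(C(K),d_H)$ (indeed $|\diam L_1 - \diam L_2| \le 2\,d_H(L_1,L_2)$), the map $t\mapsto\diam\alpha(t)$ is continuous, equals $0$ at $t=0$, and reaches $\diam K\ge \diam A>c>c/8$ at $t=1$. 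By the intermediate value theorem it attains the value $c/8$, yielding the desired subcontinuum containing $p$. Applying this with $p=x$ and $p=y$ furnishes $A_1$ and $A_2$.

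Since $x\in A_1$ and $\diam A_1=c/8$, every point of $A_1$ lies within $c/8$ of $x$; similarly for $A_2$ and $y$. Hence for any $p\in A_1$, $q\in A_2$,
\[
d(p,q)\ \ge\ d(x,y)-d(x,p)-d(y,q)\ >\ c-\frac{c}{8}-\frac{c}{8}\ =\ \frac{3c}{4},
\]
so $d(A_1,A_2)\ge 3c/4$. Because $d_H(A_1,A_2)\ge d(A_1,A_2)$, we conclude
$d_H(A_1,A_2)>3c/4>c/8$, giving the lemma.

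The main obstacle is the cutting step: producing subcontinua of exactly the prescribed diameter $c/8$ that contain a specified point. A naive attempt would take the component $C_r$ of $x$ in $A\cap\overline{B(x,r)}$ and vary $r$; a boundary-bumping argument shows $\diam C_r\in[r,2r]$, but the function $r\mapsto\diam C_r$ need not be continuous, so it may skip the value $c/8$. The hyperspace/order arc argument sidesteps this issue by producing a path in $C(A)$ along which diameter really does vary continuously from $0$ up past $c/8$.
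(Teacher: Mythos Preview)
Your proof is correct and follows essentially the same route as the paper: choose two far-apart points of $A$, run order arcs in $C(A)$ from each singleton up to $A$, and stop when the diameter hits $c/8$; then a triangle-inequality estimate bounds $d_H(A_1,A_2)$ from below. Your write-up is in fact more careful than the paper's (you justify the intermediate-value step, spell out the triangle-inequality computation, and note that the order arcs live in $C(A)$ rather than merely $C(X)$), and you start from $d(x,y)>c$ instead of the paper's $d(a_1,a_2)>c/2$, which only sharpens the final constant.
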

\begin{proof} Let $a_1,b_2\in A$ be points with $d(a_1,a_2)>\frac{c}{2}$. It is well-known that there are arcs $\alpha_i:[0,1]\rightarrow C(X)$ from $\{a_i\}$ to $A$, such that if $t<s$ then $\alpha(t)\subset\alpha(s)$, $i=1,2$. Let $t_1,t_2\in[0,1]$ be the smallest times such that $\diam\alpha_i(t_i)=\frac{c}{8}$, $i=1,2$. Clearly, $A_1=\alpha(t_1)$ and $A_2=\alpha(A_2)$ satisfies $d_H(A_1,A_2)>\frac{c}{8}$.

\end{proof}

Next we give a technical lemma that we shall use in the proof of the main result in this section.

\begin{lemma}\label{1} Let $(X,F)$ be an upper semi-continuous topological dynamical system positively $cw$-expansive with constant $\eta>0$. For all $0<\delta<\eta$ and $\epsilon>0$ there is $N\in\mathbb{N}$ such that if $A$ is a continuum and $$\sup_{0\leq i\leq N}\diam(F^i(A))<\frac{\delta}{2},$$
then $\diam A<\epsilon$.
\end{lemma}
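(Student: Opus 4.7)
The plan is to argue by contradiction. Suppose the statement fails: there exist $\delta\in(0,\eta)$ and $\epsilon>0$ such that for every $N\in\NN$ one can find a continuum $A_N\subset X$ with $\diam A_N\geq \epsilon$ and $\diam F^i(A_N)<\delta/2$ for every $0\leq i\leq N$. Since the hyperspace $(C(X),d_H)$ is compact, after passing to a subsequence I may assume $A_{N_k}\to A$ in the Hausdorff metric. A Hausdorff limit of continua is a continuum, and $\diam$ is Hausdorff-continuous, so $A\in C(X)$ with $\diam A\geq \epsilon>0$; in particular $A$ is non-degenerate.

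The crucial claim is that $\diam F^i(A)\leq \delta/2$ for every $i\geq 0$. Granting this, positive cw-expansiveness applied to the continuum $A$ would produce some $n$ with $\diam F^n(A)>\eta>\delta/2$, contradicting the claim. To prove the claim I would lift the problem to the orbit space. For each $k$, set $\Sigma^+(A_{N_k})=\{(x_j)_{j\geq 0}\in X^{\NN}:x_0\in A_{N_k},\ x_{j+1}\in F(x_j)\}$, a closed subset of $X^{\NN}$. Using upper semi-continuity together with the hypothesis that each $F(x)$ is compact and connected, one checks inductively that every finite truncation of $\Sigma^+(A_{N_k})$ is connected (the clopen partition of a truncation pulls back to an open partition of the preceding truncation via USC), and therefore $\Sigma^+(A_{N_k})$ is itself a continuum. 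Passing to a further subsequence in the compact hyperspace $2^{X^{\NN}}$, these orbit spaces converge in Hausdorff metric to a continuum $\Sigma_\infty\subset X^{\NN}$. By USC and closedness of the values, every element of $\Sigma_\infty$ is a genuine orbit of $F$, and continuity of $\pi_0$ gives $\pi_0(\Sigma_\infty)=A$.

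Given $y_1,y_2\in F^i(A)$, one picks orbits in $\Sigma_\infty$ whose $i$-th coordinates are $y_1,y_2$, approximates them by orbits $(x_j^{1,k}),(x_j^{2,k})\in\Sigma^+(A_{N_k})$ coming from Hausdorff convergence in $X^{\NN}$, and uses $\diam F^i(A_{N_k})<\delta/2$ (for $i\leq N_k$) to conclude $d(y_1,y_2)\leq \delta/2$ in the limit. The main obstacle is precisely to show that every orbit over $A$ is captured inside $\Sigma_\infty$, since without lower semi-continuity the image $F^i(A)$ can strictly contain the Hausdorff limit of $F^i(A_{N_k})$; this is where I would use the connectedness of $\Sigma_\infty$ combined with $\pi_0(\Sigma_\infty)=A$, a uniform version of USC on compact sets, and a diagonal extraction that propagates approximating orbits branch by branch. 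Once the claim is in place, the cw-expansiveness of $F$ applied to $A$ contradicts $\sup_{i\geq 0}\diam F^i(A)\leq \delta/2<\eta$, and the lemma follows.
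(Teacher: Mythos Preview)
Your global strategy---argue by contradiction, pass to a Hausdorff-limit continuum $A\in C(X)$ with $\diam A\ge\epsilon$, show that all iterates $F^i(A)$ stay below $\eta$, and then invoke cw-expansiveness---coincides with the paper's. The divergence is entirely in how you establish the bound on $\diam F^i(A)$.

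The paper never enters the orbit space. It fixes $n$, uses upper semi-continuity of the iterate $F^n$ on the compact space $X$ to produce $\epsilon'>0$ such that $d(x,y)<\epsilon'$ forces $F^n(y)\subset B(F^n(x),\delta/4)$, then picks $n_0>n$ large enough that $A\subset B(A_{n_0},\epsilon')$ and concludes $F^n(A)\subset B\bigl(F^n(A_{n_0}),\delta/4\bigr)$; since $\diam F^n(A_{n_0})<\delta/2$ this gives $\diam F^n(A)<\delta<\eta$. That one-line comparison is the entire argument for the key claim.

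Your route through $\Sigma^+(A_{N_k})$ and its Hausdorff limit $\Sigma_\infty$ both adds machinery and leaves a genuine gap. The step you yourself flag as ``the main obstacle''---that every point of $F^i(A)$ is the $i$-th coordinate of some orbit in $\Sigma_\infty$---says precisely that an arbitrary orbit over $A$ can be approximated by orbits over the $A_{N_k}$. That is a \emph{lower} semi-continuity statement about $F$; upper semi-continuity yields only the opposite inclusion $\pi_i(\Sigma_\infty)\subset F^i(A)$, which bounds the wrong set. Neither the connectedness of $\Sigma_\infty$, nor the equality $\pi_0(\Sigma_\infty)=A$ (which uses only that each $a\in A$ extends to \emph{some} orbit, not to a prescribed one), nor a diagonal extraction will manufacture approximants to a given branch without LSC. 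So the orbit-space detour does not close the gap; replace it by the paper's direct USC comparison of $F^n(A)$ with $F^n(A_{n_0})$.
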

\begin{proof} Otherwise, there are $\epsilon>0$ and a sequence of continuum sets $A_n$ such that $\diam A_n\geq\epsilon$ and $$\sup_{0\leq i\leq n}\diam(F^i(A_n))<\frac{\delta}{2},\quad \forall \,\,
n\in\mathbb{N}.$$
 Since $C(X)$ is compact, we can assume that $A_n\rightarrow A$ with $A\subset X$ a continuum set. Here $C(X)$ is the set of compact and connected subsets of $X$.
  Hence $\diam(F^n(A))<\delta$ for all $n\in\mathbb{N}$. 
  In fact, fix $n\in\mathbb{N}$ and given $x\in X$ and $\epsilon >0$, denote by $B(x,\epsilon)$ the usual ball with center $x$ and radius $\epsilon$.
  Since $F$ is upper semi-continuous there is $\epsilon'>0$ such that for all $x\in X$ if $y\in B(x,\epsilon')$ then $F^n(y)\subset B(F^n(x),\frac{\delta}{4})$. However, there is $n_0>n$ such that $A\subset B(A_{n_0},\epsilon')$, therefore $F^n(A)\subset B(F^n(A_{n_0}),\frac{\delta}{4})$. Since $\diam F^n(A_{n_0})<\frac{\delta}{4}$, we have $\diam F^n(A)<\delta<\eta$. This is a contradiction and proves the lemma.

\end{proof}

If $X$ is a compact metric space, $\dim_{top}(X)$ means the topological dimension of $X$.

\begin{thm}\label{c1} Let $(X,F)$ be an upper semicontinuous $cw$-expansive topological dynamical system with constant of expansivity equals to $\eta>0$. If $\dim_{top}(X) >0$ then the topological entropy $h_{top}(X,F)$ is positive.
\end{thm}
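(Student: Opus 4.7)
The plan is to adapt the tree-of-continua construction of Kato \cite[Theorem 5.8]{K1} to the set-valued setting, producing at each level $k$ a family of $2^k$ finite orbits of length at most $kN+1$, pairwise separated by a fixed positive distance. Exponential growth in $k$ against linear growth of orbit length will force $h_{top}(X,F)\geq(\log 2)/N>0$. The engine is a self-closed subdivision step: given any continuum $A$ with $\diam(A)\geq\epsilon_0$, within at most $N$ iterations of $F$ the image $F^n(A)$ contains two sub-continua of diameter at least $\epsilon_0$ whose mutual distance exceeds a fixed $\rho_0>0$.

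To construct the subdivision step I fix $\delta\in(\eta/2,\eta)$ and set $\epsilon_0=\eta/32$; Lemma \ref{1} then provides $N\in\mathbb{N}$ such that any continuum $A$ with $\diam(A)\geq\epsilon_0$ admits some $n\leq N$ with $\diam(F^n(A))\geq\delta/2>\eta/4$. A preliminary observation is that upper semi-continuity of $F$ together with connectedness of each $F(x)$ (condition (1) of positive $cw$-expansiveness) forces $F(C)$ to be a continuum whenever $C$ is, so inductively $F^n(A)$ is a continuum. Lemma \ref{l2} applied with $c=\eta/4$ then produces sub-continua $B_1,B_2\subset F^n(A)$ with $\diam(B_j)=\eta/32=\epsilon_0$; moreover, by inspecting its proof, each $B_j$ lies in the closed ball of radius $\eta/32$ around a point $a_j\in F^n(A)$ with $d(a_1,a_2)>\eta/8$, so $d(B_1,B_2)>\eta/8-2\cdot\eta/32=\eta/16=:\rho_0$. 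Since $\dim_{top}(X)>0$ and $cw$-expansiveness forces $\diam(X)\geq\eta$, an order arc in $C(X)$ provides an initial continuum $A_{\emptyset}$ with $\diam(A_{\emptyset})=\epsilon_0$. Iterating the subdivision along finite binary strings yields a tree $\{A_s\}$ of continua of diameter $\geq\epsilon_0$ with transit times $n_s\leq N$; for a length-$k$ string $s$, set $T(s)=\sum_{i<k}n_{s|_i}\leq kN$.

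To extract orbits from the tree, for each leaf $s$ of level $k$ pick $y_s\in A_s$ and, using $A_{s|_i}\subset F^{n_{s|_{i-1}}}(A_{s|_{i-1}})$, trace back level by level a point $z_{i-1}^s\in A_{s|_{i-1}}$ joined to $z_i^s$ by an orbit of length $n_{s|_{i-1}}$; concatenation yields an orbit $\sigma_s$ starting in $A_{\emptyset}$, visiting $A_{s|_i}$ at time $T(s|_i)$, and ending at $y_s$. Pad each $\sigma_s$ forward admissibly to common length $kN+1$. For distinct leaves $s\neq s'$ sharing a prefix of length $\ell<k$, one has $T(s|_{\ell+1})=T(s'|_{\ell+1})\leq kN$, and the two orbits sit in the sibling continua $A_{s|_{\ell+1}}$, $A_{s'|_{\ell+1}}$, which are $>\rho_0$ apart; hence $\{\sigma_s\}_{|s|=k}$ is a pairwise $(kN+1,\rho_0)$-separated family of size $2^k$. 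Therefore $s_{kN+1}(\rho_0)\geq 2^k$ and
$$h_{top}(X,F)\geq\limsup_{k\to\infty}\frac{\log 2^k}{kN+1}=\frac{\log 2}{N}>0.$$
The main obstacle is the self-closure of the iteration: the sub-continua delivered by Lemma \ref{l2} must themselves reach the threshold $\epsilon_0$ so that Lemma \ref{1} can be reapplied, which is precisely the relation $\epsilon_0\leq(\delta/2)/8$ and dictates the choice $\epsilon_0=\eta/32$; a related, more routine verification is the connectedness of $F^n(A)$, which would fail without the connectivity hypothesis on the values $F(x)$.
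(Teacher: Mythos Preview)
Your proof is correct and follows essentially the same tree-of-continua strategy as the paper's own argument: invoke Lemma~\ref{1} to get a uniform time bound $N$, then repeatedly apply Lemma~\ref{l2} to split the blown-up image into two well-separated sub-continua, building a binary tree whose $2^k$ leaves yield $(kN,\rho_0)$-separated orbits. You are in fact more careful than the paper on a few points it leaves implicit---the connectedness of $F^n(A)$, the self-closure condition $\epsilon_0\leq c/8$ that permits iteration, the passage from the Hausdorff separation in Lemma~\ref{l2} to a metric gap $d(B_1,B_2)>\rho_0$, and the observation that sibling branches share the same visiting time---but the architecture is identical.
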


\begin{proof} Let $A\subset C(X)$ be a non-trivial continuum set and $0<\delta<\min\{\eta,\diam A\}$. Let $N\in\mathbb{N}$ be given by Lemma \ref{1} relative to $\epsilon=\frac{\delta}{10}$. Then we can find a subcontinuum $A_0\subset A$ such that $\frac{\delta}{10}<\diam(A_0)<\delta$. Fix $m\in\mathbb{N}$. There is $0\leq l\leq N$ with $\diam(F^{i_1}(A))>\delta$. By Lemma \ref{l2} we can find  $A_1,A_2\subset F^l(A)$ continuum sets with $\diam(A_i)=\frac{\delta}{8}$, $i\in\{1,2\}$ and $d_h(A_1,A_2)>\frac{\delta}{8}$. Arguing in the same way we can find a finite collection $\{A_{i_1,i_2,...,i_m};i_k=1 \ \text{or }2 \}$ of continuum subsets of $X$ satisfying the following properties:
\begin{itemize}
\item $A_{i_1,...,i_j}$ is a continuum subset of $F^{l}(A_{i_1,...,i_{j-1}})$, $1\leq l=l(i_j-1)\leq N$, $i_k=1,2$;
\item $\diam A_{i_1,...,i_j}=\frac{\delta}{8}$, $i_k=1,2$, $1<j\leq m$;
\item $d(A_{i_1,...,i_j,1},A_{i_1,...,i_j,2})>\frac{\delta}{8}$, $i_k=1,2$, $1<j< m$. 
\end{itemize}
Then for each $(i_1,...,i_m)$ we can find an orbit $\overline{a}_{i_1,...,i_m}=(x_0,x_1,...)$ such that for each $j\in\{1,...,m\}$ it has $x_{l(i_1)+...+l(i_{j})}\in A_{i_1,...,i_j}$. Since $1\leq l(k)\leq N$ for each $k=1,...,m$, we have that the set $$E_m=\{\overline{a}_{i_1,...,i_m};i_j=1 \ \text{or }2\}$$ is a $(\frac{\delta}{8},Nm)$-separated set with $2^m$ elements. Thus $s_{Nm}(\frac{\delta}{8})\geq 2^m$ and         
\begin{eqnarray*} \limsup_{n\rightarrow\infty}\frac{1}{n}\log s_{n}(\frac{\delta}{8})&\geq& \limsup_{m\rightarrow\infty}\frac{1}{Nm}\log s_{Nm}(\frac{\delta}{8}) \\  
&\geq& \limsup_{m\rightarrow\infty}\frac{1}{Nm}\log s_{Nm}(\frac{\delta}{8}) \\
&\geq& \limsup_{m\rightarrow\infty}\frac{1}{Nm}\log 2^m = \frac{1}{N}\log 2>0.
\end{eqnarray*}   
Hence $h(X,F)>0$. The proof of Theorem \ref{c1} is complete.

\end{proof}
 

Combining Theorem \ref{c1} with \cite[Theorems 3.1 and 3.5]{RT} we obtain a generalization to the setting of set-valued functions of 
\cite[Theorem 5.8]{K1} :

\begin{clly} Let $(X,F)$ be an upper semicontinuous topological dynamical system $cw$-expansive with $\dim_{top}(X)>0$. Then
\begin{enumerate}
\item If $\overrightarrow{\sigma}:\ \overrightarrow{\Orb}(X,F)\rightarrow \ \overrightarrow{\Orb}(X,F)$ is the shift map defined as
$$\overrightarrow{\sigma}(x_0,x_1,...)=(x_1,x_2,...),$$
then $h(\overrightarrow{\sigma})>0$.
\item If $\overleftarrow{\sigma}: \ \overleftarrow{\Orb}(X,F)\rightarrow \ \overleftarrow{\Orb}(X,F)$ is the shift map defined as
$$\overleftarrow{\sigma}(...,x_{-1},x_0)=(...,x_{-2},x_{-1}),$$
then $h(\overleftarrow{\sigma})>0$.
\item If $\sigma:\Orb(X,F)\rightarrow \Orb(X,F)$ is the shift map defined as
$$\sigma(\{x_i\}_{i=-\infty}^{\infty})=(\{y_i\}_{i=-\infty}^{\infty}),$$
where $y_i=x_{i+1}$, then $h(\sigma)>0$.
\end{enumerate}
\end{clly}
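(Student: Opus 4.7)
The plan is to derive the corollary as a direct consequence of Theorem \ref{c1} combined with the entropy identifications proved in \cite[Theorems 3.1 and 3.5]{RT}.

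First, I would apply Theorem \ref{c1}. The hypotheses assumed in the corollary --- upper semicontinuity of $F$, $cw$-expansiveness, and $\dim_{top}(X)>0$ --- coincide exactly with the hypotheses of Theorem \ref{c1}, so we immediately obtain $h(X,F)>0$.

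Second, for each of the three shift maps $\overrightarrow{\sigma}$, $\overleftarrow{\sigma}$, and $\sigma$, I would invoke the corresponding statement of \cite[Theorems 3.1 and 3.5]{RT}, which identifies $h(X,F)$ with the topological entropy of the shift map on the appropriate orbit space (endowed with the product topology coming from the metric $\rho$ in \eqref{metricarho}). Substituting $h(X,F)>0$ into each identity then yields the three positivity statements (1), (2), and (3). For the backward shift $\overleftarrow{\sigma}$, if \cite{RT} does not treat this case directly, I would supply it by using the reversal conjugacy $(\ldots,x_{-1},x_0)\mapsto(x_0,x_{-1},\ldots)$ between $\overleftarrow{\Orb}(X,F)$ and the forward orbit space of the set-valued inverse $G(y)=\{x\in X : y\in F(x)\}$; this conjugates $\overleftarrow{\sigma}$ with the forward shift associated with $G$ and reduces the claim to the forward case applied to $G$.

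The main obstacle is nothing new from a dynamical standpoint; it is essentially bookkeeping. One must verify that the cited identifications in \cite{RT} apply under upper semicontinuity alone, and, if the backward case is reduced via the inverse $G$ sketched above, that $G$ inherits upper semicontinuity and compact values so that the cited theorems are still applicable. Both checks are routine because the separated and spanning sets in Definition \ref{d.entropia} are built from orbit segments $\Sigma_n$ that are intrinsically time-reversible along full orbits, so no new $cw$-expansive-type estimate is needed beyond what Theorem \ref{c1} already provides.
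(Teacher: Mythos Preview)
Your proposal is correct and follows exactly the paper's approach: the corollary is stated without a separate proof, merely as the combination of Theorem~\ref{c1} with \cite[Theorems~3.1 and~3.5]{RT} (the entropy identifications $h(X,F)=h(\overrightarrow{\sigma})=h(\overleftarrow{\sigma})=h(\sigma)$). Your additional care about the backward shift via the inverse $G$ is unnecessary here since the cited results already cover all three shifts, but it does no harm.
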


\begin{example} Let $f:S^1\rightarrow S^1$ be a rotation of the circle. Define $F:S^1\rightarrow 2^{S^1}$ by $F(t)=[t,f(t)]$. Then $(S^1,F)$ is $cw$-expansive, but 
$\vec{\sigma}$
 and $\sigma$ are not $cw$-expansive maps. Indeed, there is $n\in\mathbb{N}$ such that for all point $t\in S^1$ we have $F(t)=S^1$. Therefore for each continuum $A\subset S^1$ we obtain that $F^n(A)=S^1$ implying that $(X,F)$ is $cw$-expansive. 
 On the other hand, for each $t\in S^1$ we have that
 $I^{\pm}(t)=(t,t,t,t,...)\in\Sigma^+(t)\cap\Sigma^{-}(t)$ and $I(t)=(...,t,t,t,t,...)\in\Sigma(t)$. 
 Given $\delta>0$ choose $t<t'$ such that $\diam[t_1,t_2]<\delta$. Then, $B=\{I(t);t\in[t_1,t_2]\}$ and $B^\pm=\{I^\pm(t);t\in[t_1,t_2]\}$ are continuum subsets of  \,
   $\overrightarrow{\Orb}(X,F)$. 
 
 But $\overrightarrow{\sigma}^n(B^\pm)=\overleftarrow{\sigma}^n(B^\pm)=B^\pm$ and $\sigma^n(B)=B$ for all $n\in \mathbb{N}$. 
 Hence $\overrightarrow{\sigma}$, $\overleftarrow{\sigma}$, and $\sigma$ are not positively $cw$-expansives maps.
\end{example}

If $A\subset X$ is a continuum, then $\pi(A)=\bigcup_{x\in A}\Sigma^+(x)$ is a subcontiuum of \ $\overrightarrow{\Orb}(X,F)$. However, if $A$ is a subcontinuum of  \ $\overrightarrow{\Orb}(X,F)$ then $\pi^{-1}(A)$ may be not a continuum in $X$. In fact, it can be only one point: in example \ref{ex} described below, for each $t\in S^1$, $\Sigma^+(t)$ is a continuum but $\pi^{-1}(\Sigma^+(t))=\{t\}$.  

\begin{prop} Let $(X,F)$ be a upper semicontinuous topological dynamical system and $\overrightarrow{\sigma}: \overrightarrow{\Orb}(X,F) \to \overrightarrow{\Orb}(X,F)$,
$\sigma: \Orb(X,F) \to \Orb(X,F)$ be the shift maps defined above.
Then
\begin{enumerate}
\item if $\overrightarrow{\sigma}$ is a positively $cw$-expansive map then $(X,F)$ is a positively $cw$-expansive topological dynamical system;
\item if $\sigma$ is a positively $cw$-expansive map then $(X,F)$ is a positively $cw$-expansive topological dynamical system.
\end{enumerate}
\end{prop}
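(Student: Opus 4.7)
The idea in both cases is to lift a continuum $A\subset X$ to a subcontinuum $\tilde A$ of the orbit space, apply the positive $cw$-expansivity of the shift to $\tilde A$, and then read off the resulting diameter estimate one coordinate at a time via the explicit form of the product metric $\rho$ defined in \eqref{metricarho}.

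For part (1), set $\tilde A=\bigcup_{x\in A}\Sigma^{+}(x)\subset\overrightarrow{\Orb}(X,F)$. By the observation recorded just before this proposition, $\tilde A$ is a subcontinuum of $\overrightarrow{\Orb}(X,F)$, and it is non-degenerate because its zeroth-coordinate projection equals $A$. Let $\delta'>0$ be the $cw$-expansivity constant of $\overrightarrow{\sigma}$; then there is $n\in\NN$ with $\diam\overrightarrow{\sigma}^{n}(\tilde A)>\delta'$. A direct calculation gives the identity
\[
\diam\overrightarrow{\sigma}^{n}(\tilde A)=\sup_{i\ge 0}\frac{\diam F^{n+i}(A)}{i+1},
\]
since the $i$th coordinate of $\overrightarrow{\sigma}^{n}(\tilde A)$ sweeps out exactly $F^{n+i}(A)$ and different coordinates can be extremized independently by choosing appropriate forward orbits. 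Hence some $i\ge 0$ produces $\diam F^{n+i}(A)>(i+1)\delta'\ge\delta'$, and $(X,F)$ is positively $cw$-expansive with the same constant $\delta'$.

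For part (2), the analogous lift $\tilde A=\bigcup_{x\in A}\Sigma(x)\subset\Orb(X,F)$ is a non-degenerate subcontinuum, but the previous plan does not go through unchanged, because in $\Orb(X,F)$ the metric $\rho$ also measures how two orbits differ in their negative-index coordinates, which have no direct interpretation as forward iterates $F^m(A)$. The key is to iterate: assume for contradiction that $\diam F^{m}(A)\le\delta'/2$ for every $m\ge 0$. For $\overline y,\overline z\in\sigma^{n}(\tilde A)$, coordinates with $i\ge -n$ satisfy $y_i,z_i\in F^{n+i}(A)$ and therefore contribute at most $\delta'/2$ to $\rho$, while coordinates with $i<-n$ contribute at most $\diam X/(|i|+1)\le\diam X/(n+1)$. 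Hence
\[
\diam\sigma^{n}(\tilde A)\le\max\!\Bigl(\tfrac{\delta'}{2},\,\tfrac{\diam X}{n+1}\Bigr)<\delta'
\quad\text{for all}\quad n\ge N_{0}:=\lceil \diam X/\delta'\rceil.
\]
Since $\sigma$ is a bijection on $\Orb(X,F)$, the shifted set $\sigma^{N_{0}}(\tilde A)$ is again a non-degenerate subcontinuum. Applying the $cw$-expansivity of $\sigma$ to it produces some $k\ge 0$ with $\diam\sigma^{k+N_{0}}(\tilde A)>\delta'$, contradicting the previous estimate. Therefore some $F^{m}(A)$ must have diameter strictly larger than $\delta'/2$, and $(X,F)$ is positively $cw$-expansive with constant $\delta'/2$.

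The main obstacle is precisely the two-sided case: the product metric mixes past and future, so one cannot read the diameter of $\sigma^{n}(\tilde A)$ purely in terms of forward iterates. The trick is to apply $cw$-expansivity not to $\tilde A$ but to the shifted continuum $\sigma^{N_0}(\tilde A)$, which forces the witness shift to occur sufficiently late that the backward coordinates have been tamed by the weight $1/(|i|+1)$. A secondary subtlety is ensuring that $\tilde A$ is a non-degenerate continuum, which requires every point of $A$ to admit a full orbit; this is automatic under the standing hypothesis that the zeroth-coordinate projection $\Orb(X,F)\to X$ is surjective.
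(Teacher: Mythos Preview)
Your argument for part (1) is essentially the paper's: lift $A$ to $\tilde A=\bigcup_{x\in A}\Sigma^{+}(x)$, apply the $cw$-expansivity of $\overrightarrow{\sigma}$, and read off a coordinate where the diameter exceeds $\delta'$. The paper chooses two witness orbits $\overline x,\overline y$ and a single index $n_2$, whereas you package the same computation as the identity $\diam\overrightarrow{\sigma}^{n}(\tilde A)=\sup_{i\ge 0}\diam F^{n+i}(A)/(i+1)$ (valid because sups commute and the $i$th coordinate of $\overrightarrow{\sigma}^{n}(\tilde A)$ is exactly $F^{n+i}(A)$). These are the same proof.

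For part (2) the paper writes only ``The proof is similar to the previous item and we leave to the reader.'' Your treatment is genuinely more careful, and the subtlety you flag is real: copying the argument of (1) verbatim yields witnesses $\overline x,\overline y\in\tilde A$ and an index $n_2\in\ZZ$ with $d(x_{n_1+n_2},y_{n_1+n_2})/(|n_2|+1)>\delta'$, but if $n_1+n_2<0$ the points $x_{n_1+n_2},y_{n_1+n_2}$ need not lie in any forward image $F^{m}(A)$, so nothing follows about $\diam F^{m}(A)$. Your contradiction argument---assume $\diam F^{m}(A)\le\delta'/2$ for all $m\ge 0$, bound the backward coordinates of $\sigma^{n}(\tilde A)$ by $\diam X/(|i|+1)\le\diam X/(n+1)$, and then apply $cw$-expansivity of $\sigma$ to the shifted continuum $\sigma^{N_0}(\tilde A)$ to force a late enough witness---is a correct and elegant way to close this gap. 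In short, you have supplied the missing details behind the paper's ``similar'', and what you supply is not a routine repetition of (1) but a real additional idea (pre-shifting by $N_0$ so that the weights $1/(|i|+1)$ absorb the uncontrolled past). The only implicit hypothesis you use beyond the paper is that every $x\in A$ admits a full orbit (so that $\tilde A$ projects onto $A$ and is non-degenerate); this is the surjectivity of $\pi_0:\Orb(X,F)\to X$, which the paper also tacitly assumes in this section.
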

\begin{proof}  $(1)$ Let $\delta>0$ be a $cw$-expansive constant to $\overrightarrow{\sigma}$.
 If $A\subset X$ is a continuum set, then $\pi(A)=\bigcup_{x\in A}\Sigma^+(x)$ is a subcontinuum of $\overrightarrow{Orb}(X,F)$.
  By the choice of $\delta$, there is $n_1\in\mathbb{N}$ such that 
  $\diam(\overrightarrow{\sigma}^{n_1}(\pi(A))>\delta$. 
 Since $\pi(A)$ is a compact set, we can find $\overline{x},\overline{y}\in \pi(A)$ such that $\rho(\overrightarrow{\sigma}^{n_1}(\overline{x}),\overrightarrow{\sigma}^{n_1}(\overline{y})>\delta$. 
 By definition of $\rho$ at (\ref{metricarho}), there is $n_2\in\mathbb{N}$ such that $$\frac{d(x_{n_1+n_2},y_{n_1+n_2})}{n_2+1}>\delta.$$
Since $x_{n_1+n_2},y_{n_1+n_2}\in F^{n_1+n_2}(A)$, we get $\diam(F^{n_1+n_2}(A))>(n_2+1)\delta\geq\delta$, proving $(1)$.

(2) The prove is similar to the previous item and we leave to the reader.

\end{proof}

\section{Separated and spanning topological entropy }\label{s-sepentro}

In this section we exploit the notion of separated and spanning entropy introduced in \cite{CMM}. For this,  let $X$ be a compact metric space and $F:X\to 2^{X}$ be a set-valued function defined on $X$. Given $n \in \NN$ we consider the product space $\prod_{i=1}^nX$  endowed with the metric $D_n$ defined at
(\ref{metricadn}).

For $n\in\mathbb{N}$ define the map $d_n:X\times X\rightarrow\mathbb{R}$ by
$$d_n(x,y)=\inf\{D_n(\overline{x},\overline{y});\overline{x}\in\Sigma_n(x) \text{ and }\overline{y}\in\Sigma_n(y) \}.$$
These maps are metrics in the single-valued case but in general they are only semi-metrics, see \cite{Do,Gi} and \cite[Example 4.5]{CMM}.

The $\epsilon$-ball centered at $x\in X$ and radius $\epsilon$ with respect to $d_n$ is given by
$$B_n[x,\epsilon]=\{y \in X;d_n(x,y)\leq\epsilon\}.$$
For $\epsilon>0$ and $n\in\mathbb{N}$ we say that $E\subset X$ is an $(d_n,\epsilon)$ separated  set if 
$$B_n[x,\epsilon]\cap E=\{x\}, \ \ \forall x\in E.$$
We let $S_n(\epsilon)$ be the largest cardinality of a $(d_n,\epsilon)$ separated set and define the $se$-separated entropy of $(X,F)$ by
$$h_{se}(X,F)=\lim_{\epsilon\rightarrow 0}\limsup_{n\rightarrow\infty}\frac{1}{n}\log S_n(\epsilon).$$

For $\epsilon>0$ and $n\in\mathbb{N}$ we say that $G\subset X$ is an $(d_n,\epsilon)$-spanning set if 
$$X=\bigcup_{x\in G} B_n[x,\epsilon].$$
We let $R_n(\epsilon)$ be the smallest cardinality of a $(d_n,\epsilon)$-spanning set and define the spanning entropy of $X,F)$ by
$$h_{sp}(X,F)=\lim_{\epsilon\rightarrow 0}\limsup_{n\rightarrow\infty}\frac{1}{n}\log R_n(\epsilon).$$

It is proved in \cite[Theorem 3.5]{CMM} that $h_{sp}(X,F)\leq h_{se}(X,F)$ and that they coincide if and if $d_n$ is a metric \cite[Theorem 3.6]{CMM}.
 Next, we compare the separated entropy to the entropy of a topological dynamical system $(X,F)$ given in Definition \ref{d.entropia}.

\begin{thm}\label{hseh} Let $(X,F)$ be a topological dynamical system. Then $h_{se}(X,F)\leq h(X,F)$.  
\end{thm}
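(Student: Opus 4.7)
The plan is to show that every $(d_n,\epsilon)$-separated set in $X$ lifts to an $(n,\epsilon)$-separated set in $\Orb_n(X,F)$ of the same cardinality, so that $S_n(\epsilon) \leq s_n(\epsilon)$ for all $n$ and $\epsilon$; taking $\frac{1}{n}\log$, $\limsup_{n\to\infty}$, and $\lim_{\epsilon\to 0}$ then gives the inequality on entropies.

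More precisely, fix $\epsilon > 0$ and $n \in \NN$, and let $E \subset X$ be a $(d_n,\epsilon)$-separated set with $\#E = S_n(\epsilon)$. For each $x \in E$, choose (arbitrarily) an orbit segment $\overline{x} = (x_0,x_1,\dots,x_{n-1}) \in \Sigma_n(x)$, which is non-empty since $F(x) \neq \emptyset$ for all $x$. Define $\widetilde{E} = \{\overline{x} : x \in E\} \subset \Orb_n(X,F)$. Since distinct $x,y \in E$ yield orbits with $x_0 = x \neq y = y_0$, the map $x \mapsto \overline{x}$ is injective and so $\#\widetilde{E} = \#E$. The key step is then purely a matter of unwinding the definition of $d_n$ as an infimum: for any two distinct $x,y \in E$ the $(d_n,\epsilon)$-separation gives $d_n(x,y) > \epsilon$, and the very definition
\[
d_n(x,y) \;=\; \inf\bigl\{D_n(\overline{x}',\overline{y}') : \overline{x}' \in \Sigma_n(x),\ \overline{y}' \in \Sigma_n(y)\bigr\}
\]
forces $D_n(\overline{x},\overline{y}) \geq d_n(x,y) > \epsilon$ for the chosen lifts. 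Hence $\widetilde{E}$ is $(n,\epsilon)$-separated in $\Orb_n(X,F)$, so $s_n(\epsilon) \geq \#\widetilde{E} = S_n(\epsilon)$.

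Passing to $\frac{1}{n}\log$ and taking $\limsup_{n\to\infty}$ followed by $\lim_{\epsilon\to 0}$ yields $h_{se}(X,F) \leq h(X,F)$, as desired. There is no substantive obstacle here: the proof is essentially a one-line consequence of the fact that the semi-metric $d_n$ is defined by taking an infimum over lifts, so any $d_n$-separation is automatically preserved by any pair of lifts. The only point that requires a word of care is the injectivity of the lifting $x \mapsto \overline{x}$, which is immediate from projecting onto the zeroth coordinate.
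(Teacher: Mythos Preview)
Your proof is correct and follows essentially the same approach as the paper: lift a maximal $(d_n,\epsilon)$-separated set in $X$ to an $(n,\epsilon)$-separated set in $\Orb_n(X,F)$ of the same cardinality, using that $d_n$ is an infimum over lifts so any pair of chosen lifts is automatically $D_n$-separated. Your version is in fact slightly cleaner, since you choose finite orbit segments in $\Sigma_n(x)$ directly rather than full orbits that are then truncated, and you make the injectivity of the lifting explicit.
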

\begin{proof}  Let $n\in\mathbb{N}$ and $\epsilon>0$ be given. Let $E$ be a $(d_n,\epsilon)$-separated set with maximal cardinality. For each $x\in E$ choose $\overline{x}\in\Sigma(x)$. Let $G\subset \Orb(E,F)$ be the set of all these sequences and 
$$H=\{\overline{x(n)};\overline{x}\in G\}.$$ 
If $\overline{x(n)},\overline{y(n)}\in H$, since $x\in E$ we get $B_n[x,\epsilon]\cap E=\{x\}$, and as also $y\in E$ we get    
$$\inf\{d_n(a,b);a\in\Sigma_n(x) \text{ and } b\in\Sigma_n(y)\}>\epsilon.$$
Since $\overline{x(n)}\in\Sigma_n(x)$ and $\overline{y(n)}\in\Sigma_n(y)$, we get $d_n(x(n),y(n))>\epsilon$. Hence $H$ is an $\epsilon$-separated subset of $\Orb_n(X,F)$.

Thus, since we proved that $S_n(\epsilon)\leq s_n(\epsilon)$ for all $n\in\mathbb{N}$ and $\epsilon>0$, we conclude that $h_{se}(X,F)\leq h(X,F)$.

\end{proof}

\begin{example} Here we give an example of a dynamical system $(X,F)$ such that $h_{se}<h(X,F)$. 
Let $f:S^1\rightarrow S^1$ defined by $f(t)=t^2$. Define a topological dynamical system $(S^1,F)$ by $F(t)=B[f(t),\delta]$ for some $\delta>0$. So there is $n\in\mathbb{N}$ (depending only $\delta$) such that $F^n(t)=S^1$ for all $t\in S^1$. Therefore, $(S^1,F)$ is a $cw$-expansive dynamical system and since $\dim S^1=1$  Theorem \ref{c1} implies  that $h(S^1,F)>0$. On the other hand by \cite[Example 4.9]{CMM} implies that $h_{se}=0$. 
\end{example}

\section{Point-wise specification property for set-valued functions}\label{s-strong}
In this section we introduce the notion of point-wise specification property for set-valued functions and prove that set-valued functions satisfying this property have positive entropy.
For this, we start given the definition of specification introduced in \cite{RT}.

\begin{defi}A topological dynamical systems $(X,F)$ has the \emph{(periodic) specification property} if for each $\epsilon>0$ there is $M>0$ such that, for any $x^1,...,x^n\in X$, any $a_1\leq b_1<...<a_n\leq b_n$ with $a_{i+1}-b_i>M$, and any orbits $(x_j^i)_{j=0}^\infty$, (and for any $P>M+b_n-a_1$,) there exists a point $z\in X$ that has an orbit $(z_j)_{j=0}^\infty$ such that $d(z_j,x_j^i)<\epsilon$, for $i\in\{1,...,n\}$ and $a_i\leq j\leq b_i$ (and $z_P=z$).
\end{defi}

\begin{remark} The notion  of periodic specification property was called specification property in the \cite{RT}. Note that periodic specification property implies specification property. 
\end{remark}

The following result was proved for periodic specification property in \cite{RT}. The proof for specification property is essentially the same, and we leave it to the reader.

\begin{thm}\cite[theorem 5]{RT} Let $(X,F)$ be a topological dynamical system 
such that $\#(X) \geq 2.$ 
If $(X,F)$ has the specification property then $h(X,F)>0$.
\end{thm}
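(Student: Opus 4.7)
My plan is to follow the blueprint of the periodic-specification proof in \cite[Theorem 5]{RT}, simply dropping the closing condition $z_P=z$: this condition never enters the entropy bound, which only uses the shadowing inequalities. The strategy is a standard binary-coding construction that, for each $n$, produces $2^n$ pairwise distinguishable orbit pieces of length $nL$ in $\Orb_{nL}(X,F)$.

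Since $\#(X)\ge 2$, fix distinct $p,q\in X$ and set $\epsilon_0=d(p,q)/4>0$. Let $M=M(\epsilon_0)$ be the specification constant associated with $\epsilon_0$, and put $L=M+1$. Because $F(x)$ is a nonempty compact set for every $x\in X$, one can pick forward orbits $\overline{p}=(p_j)_{j\ge 0}\in\Sigma^+(p)$ and $\overline{q}=(q_j)_{j\ge 0}\in\Sigma^+(q)$; write $u^0=\overline{p}$ and $u^1=\overline{q}$.

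For each binary word $\omega=(\omega_0,\dots,\omega_{n-1})\in\{0,1\}^n$ I apply the specification property with the $n$ one-instant segments $[a_i,b_i]=[iL,iL]$, $i=0,\dots,n-1$, prescribing the $i$-th target orbit to be $u^{\omega_i}$. The spacing $a_{i+1}-b_i=L>M$ is valid, so specification yields a point $z^\omega\in X$ carrying a forward orbit $\overline{z}^\omega=(z^\omega_j)_{j\ge 0}$ with
\[
d\bigl(z^\omega_{iL},\,u^{\omega_i}_0\bigr) \;<\; \epsilon_0, \qquad i=0,1,\dots,n-1,
\]
where $u^0_0=p$ and $u^1_0=q$. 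If $\omega\neq\omega'$ and $i$ is any coordinate at which they disagree, the triangle inequality yields
\[
d\bigl(z^\omega_{iL},z^{\omega'}_{iL}\bigr) \;\ge\; d(p,q)-2\epsilon_0 \;=\; 2\epsilon_0 \;>\; \epsilon_0,
\]
so the truncations $\overline{z}^\omega|_{[0,nL)}\in\Orb_{nL}(X,F)$ form a $(nL,\epsilon_0)$-separated set of cardinality $2^n$. Hence $s_{nL}(\epsilon_0)\ge 2^n$, and
\[
h(X,F) \;\ge\; \limsup_{n\to\infty}\frac{1}{nL}\log s_{nL}(\epsilon_0) \;\ge\; \frac{\log 2}{L} \;>\; 0.
\]

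The main subtlety I expect in writing this out carefully is fixing the correct reading of the shadowing condition $d(z_j,x^i_j)<\epsilon$ on $[a_i,b_i]$ in the set-valued setting: one should parse the orbit $(x^i_j)$ with its initial point $x^i$ identified at time $a_i$ (the standard single-valued convention), so that at $j=a_i$ the value $z_j$ is controlled by the chosen anchor point $x^i\in\{p,q\}$ up to $\epsilon_0$. Under the alternative literal reading, one would first have to produce pre-image chains of $p$ and $q$ of arbitrary length from specification itself; modulo this parsing convention, the binary-coding argument transcribes verbatim from \cite{RT}.
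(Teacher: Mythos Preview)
Your proposal is correct and takes precisely the approach the paper indicates: the paper gives no proof of its own here but simply notes that the argument for (non-periodic) specification is ``essentially the same'' as the periodic case in \cite{RT} and leaves it to the reader. Your binary-coding construction is exactly that argument with the closing condition $z_P=z$ dropped, and the parsing subtlety you flag in your final paragraph is the only point requiring care---one the paper itself does not address.
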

Next we define point-wise specification property for set-valued functions.

\begin{defi}A topological dynamical system $(X,F)$ has the \emph{point-wise specification property} if for each $\epsilon>0$ there is $M>0$ such that, for any $x^1,...,x^n\in X$, any $a_1<a_2<...<a_n$ with $a_{i+1}-a_i>M$, there exists a point $z\in X$ such that $d_H(F^j(z),\{x^i\})<\epsilon$, for $i\in\{1,...,n\}$.
\end{defi}

\begin{remark} Let $f:X\rightarrow X$ be a continuous function on a compact metric space $X$. If $f$ has the specification property, then $(X,F_f)$ has the pointwise specification property.  Here
$F_{f} : X \to 2^X$ is the set-valued function induced by $f$, defined by 
$F_f(x) = \{f(x)\}$. 
\end{remark}     


\begin{example}\label{ex}  We will show a simple topological dynamical system with the periodic specification property which does not have the pointwise specification property. Let $X$ be a compact and connected metric space with $\diam(X)>1$ and let $F:X\rightarrow 2^{X}$ be defined by $F(x)=X$, for each $x\in X$. 
On one hand,  let $\epsilon>0$ be given and take $M=0$, $x^1,...,x^n\in X$,  $a_1\leq b_1<...<a_n\leq b_n$ with $a_{i+1}-b_i>0$, orbits $(x_j^i)_{j=0}^\infty$, and $P>1+b_n-a_1$. 
If we choose $z=x^1_{a_1}$ since $F(x)=X$ we can find an orbit $(z_j)_{j=0}^\infty$ of $z$ such that $z_j=x_j^i$, for $i\in\{1,...,n\}$ and $a_i\leq j\leq b_i$ and $z_P=z$. On the other hand, assume that $(X,F)$ has the strong specification property. Take $M\geq 0$ for $\epsilon=\frac{1}{2}$. Choose $x\in X$, and take $a_1=1$. For any $z\in X$, since $F(z)=X$ we have $$d_H(F^{a_1}(z),\{x\})=d_H(X,\{x\})>\frac{1}{2}=\epsilon,$$
leading to a contradiction. Therefore, $(X,F)$ does not have the point-wise specification property.      
\end{example}

\begin{example}\label{ex-vale} Periodic specification property do not implies $h_{se}(X,F)>0$.
Indeed, applying \cite[Theorem 5]{RT}  to  the previous example  we get\,\,$h(X,F)>0$. Since $F^n(x)=X$ for all $n\geq 1$, $d_n(x,y)=d(x,y)$. Hence $S_n(\epsilon)=C_\epsilon$, for all $n\in\mathbb{N}$. Therefore,
$$\limsup_{n\rightarrow\infty}\frac{1}{n}\log S_n(\epsilon)=\limsup_{n\rightarrow\infty}\frac{1}{n}\log C_\epsilon=0 \quad \Rightarrow \quad h_{se}(X,F)=0.$$
\end{example}

\begin{defi} A topological dynamical systems $(X,F)$ is \emph{topologically mixing} if, for any non empty subsets $U$ and $V$ in $X$, there is $M\in\mathbb{N}$ such that for each $m>M$ there is $x_0\in U$ with  orbit $(x_j)_{j=0}^\infty$ such that $x_m\in V$. See \cite{RT}.
\end{defi}

\begin{thm} Let $(X,F)$ be a topological dynamical system. If $(X,F)$ has the point-wise specification property then $(X,F)$ is topologically mixing.
\end{thm}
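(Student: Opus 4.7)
The plan is to directly unpack both definitions and feed a single pair of points into the point-wise specification property. Given non-empty open subsets $U,V\subset X$, choose $x^1\in U$ and $x^2\in V$, and pick $\epsilon>0$ small enough that $B(x^1,\epsilon)\subset U$ and $B(x^2,\epsilon)\subset V$. Apply the point-wise specification property to this $\epsilon$ to obtain the corresponding constant $M>0$. I claim this $M$ witnesses topological mixing for the pair $(U,V)$.

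For any $m>M$, apply the point-wise specification with $n=2$, the chosen points $x^1,x^2$, and $a_1=0$, $a_2=m$ (so $a_2-a_1=m>M$). This furnishes a point $z\in X$ with
\[
d_H(F^{0}(z),\{x^1\})<\epsilon \qquad\text{and}\qquad d_H(F^{m}(z),\{x^2\})<\epsilon.
\]
Since $F^{0}(z)=\{z\}$, the first inequality gives $d(z,x^1)<\epsilon$, hence $z\in U$. From the definition of $d_H$, the second inequality forces every $w\in F^m(z)$ to satisfy $d(w,x^2)<\epsilon$, so $F^m(z)\subset B(x^2,\epsilon)\subset V$. Because $F$ takes non-empty compact values, $F^m(z)\neq\emptyset$, and we may pick any $w\in F^m(z)$.

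By the inductive definition $F^m(z)=\bigcup_{y\in F^{m-1}(z)}F(y)$, there exist $x_0=z,x_1,\ldots,x_m=w$ with $x_{j+1}\in F(x_j)$ for $0\leq j<m$. Using again that $F(x)\neq\emptyset$ for every $x\in X$, extend this finite orbit to an infinite forward orbit $(x_j)_{j=0}^\infty$ of $z$. Then $x_0=z\in U$ and $x_m=w\in V$, which is precisely the condition in the definition of topological mixing. The only point requiring any care is the passage from $F^m(z)\subset V$ to an actual orbit through $z$ reaching $V$ at time $m$, which is immediate from the definition of $F^m$ as the union over all finite orbit choices given in Section 2; otherwise the argument is a direct application of the hypothesis.
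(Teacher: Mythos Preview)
Your proof is correct and follows essentially the same approach as the paper: pick points in $U$ and $V$, apply point-wise specification with $a_1=0$ and a second time exceeding $M$, and read off that the resulting $z$ lies in $U$ while any forward orbit of $z$ lands in $V$ at the prescribed time. The paper parametrises the later time as $M+m$ rather than your $m>M$ and is slightly terser about extracting the orbit, but the argument is the same.
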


\begin{proof}  Let $U$ and $V$ be open sets in $X$, take $x\in U$, $y\in V$ and $\epsilon>0$ sucht that $B(x,\epsilon)\subset U$ and $B(y,\epsilon)\subset V$. Let $M$ be given by point-wise specification property. By point-wise specification property, for each $m\in\mathbb{N}$ there is $z^m\in X$ such that $d_H(\{z^m\},\{x\})<\epsilon$ and $d_H(F^{M+m}(z^m),\{y\})<\epsilon$. Take $(z^m_n)_{n=0}^\infty\in\Sigma(z^m)$. Since $z^m_0=z^m$ and $z^m_{M+m}\in F^{M+m}(z^m)$,
$$d(z^m_0,x)=d(z^m,x)=d_H(\{z^m\},\{x\})<\epsilon \text{ and }d(z^m_{M+m},y)\leq d_H(F^{M+m}(z^m),\{y\})<\epsilon.$$
Hence $z^m_0\in U$ and $z^m_{M+m}\in V$. Then $(X,F)$ is topologically mixing.

\end{proof}

\begin{thm}\label{SSPE} Let $(X,F)$ be a topological dynamical system  with cardinality of $X$ greater than $2$. If $(X,F)$ has the point-wise specification property then $h_{se}(X,F)$ is positive.
\end{thm}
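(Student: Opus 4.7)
The plan is to realize every binary word of length $k$ as a coarse itinerary of an orbit of $F$ visiting neighborhoods of two fixed points, and thereby produce $2^k$ points that are $(d_n,\epsilon)$-separated for $n$ proportional to $k$. This is the standard Bowen-style argument adapted to the set-valued semi-metric $d_n$.

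Since $\#X\geq 2$, I would fix two points $p,q\in X$ with $p\neq q$ and set $\rho:=d(p,q)>0$. Given any $\epsilon\in(0,\rho/4)$, let $M=M(\epsilon)$ be the constant supplied by the point-wise specification property. For each $k\in\NN$ and each word $\omega=(\omega_1,\dots,\omega_k)\in\{0,1\}^k$, set $a_i:=iM$ and $x^i_\omega:=p$ if $\omega_i=0$, $x^i_\omega:=q$ if $\omega_i=1$; since $a_{i+1}-a_i=M$, point-wise specification delivers a point $z_\omega\in X$ such that
$$d_H\bigl(F^{iM}(z_\omega),\{x^i_\omega\}\bigr)<\epsilon\qquad\text{for every }i=1,\dots,k.$$

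The key step is to verify that $E_k:=\{z_\omega:\omega\in\{0,1\}^k\}$ is $(d_n,\epsilon)$-separated for $n:=kM+1$. Take $\omega\neq\omega'$ and let $i$ be an index where they differ, say $\omega_i=0$ and $\omega'_i=1$. For any $\bar a\in\Sigma_n(z_\omega)$ and $\bar b\in\Sigma_n(z_{\omega'})$ we have $a_{iM}\in F^{iM}(z_\omega)$ and $b_{iM}\in F^{iM}(z_{\omega'})$, hence $d(a_{iM},p)<\epsilon$ and $d(b_{iM},q)<\epsilon$. The triangle inequality gives
$$d(a_{iM},b_{iM})\geq\rho-2\epsilon>2\epsilon,$$
so $D_n(\bar a,\bar b)\geq d(a_{iM},b_{iM})>2\epsilon$. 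Passing to the infimum over orbit segments yields $d_n(z_\omega,z_{\omega'})\geq 2\epsilon>\epsilon$, which in particular shows that $\omega\mapsto z_\omega$ is injective, so $\#E_k=2^k$.

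Consequently $S_{kM+1}(\epsilon)\geq 2^k$, and
$$\limsup_{n\to\infty}\tfrac{1}{n}\log S_n(\epsilon)\geq\lim_{k\to\infty}\tfrac{k\log 2}{kM+1}=\tfrac{\log 2}{M}>0.$$
Since this bound holds for every sufficiently small $\epsilon>0$ and $\epsilon\mapsto\limsup_n\tfrac{1}{n}\log S_n(\epsilon)$ is non-increasing, letting $\epsilon\to 0$ gives $h_{se}(X,F)>0$. The only subtle point is controlling the infimum in the definition of $d_n$ with a \emph{strict} inequality; this is handled by reserving a gap through the choice $\epsilon<\rho/4$, which guarantees separation at least $2\epsilon$ rather than merely $\epsilon$.
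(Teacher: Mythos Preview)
Your argument is correct and follows essentially the same route as the paper: fix two distinct points, code binary words by prescribing which point to visit at times that are multiples of the specification gap, and use the Hausdorff bound $d_H(F^{iM}(z_\omega),\{x^i_\omega\})<\epsilon$ to force separation at the $iM$-th coordinate of every orbit segment. The only cosmetic differences are your choice $\epsilon<\rho/4$ (versus the paper's $\epsilon<d(x,y)/3$), your use of times $a_i=iM$ rather than $(i-1)M$, and your explicit verification that $\omega\mapsto z_\omega$ is injective---a point the paper asserts without justification; one shared technicality is that both arguments take $a_{i+1}-a_i=M$ while the definition requires strict inequality, which is harmless (use $M+1$ in place of $M$).
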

\begin{proof}  Take $x\neq y\in X$, $0<\epsilon<\frac{d(x,y)}{3}$ and $M$ given by point-wise specification property for this $\epsilon$. For $n\in\mathbb{N}$, define 
$$A_n=\{(c^1,...,c^n); c^i=x,y\}.$$
Then $A_n$ has $2^n$ elements. For each $c=(c^1,...,c^n)\in A_n$, if $a_i=(i-1)M$ there is a point $z^c\in X$ such that $$d_H(F^{(i-1)M}(z^c),\{c^i\})<\epsilon.$$
Hence, if $c\neq d\in A_n$, there is $i_0\in\{1,...,n\}$ such that $c^{i_0}\neq d^{i_0}$. Therefore, if $\overline{z_c}\in\Sigma(z^c)$ and $\overline{z^d}\in\Sigma(z^d)$ 
\begin{eqnarray}\label{D}D_{nM}(\overline{z}^c,\overline{z}^d)=\sup_{0\leq i\leq (n-1)M}d(z_i^c,z_i^d)\geq d(z_{(i_0-1)M}^c,z_{(i_0-1)M}^d).
\end{eqnarray}
Since $z_{(i_0-1)M}^c\in F^{(i_0-1)M}(z^c)$ and $z_{(i_0-1)M}^d\in F^{(i_0-1)M}(z^d)$,
$$d(z_{(i_0-1)M}^c,c^{i_0})\leq d_H(F^{(i_0-1)M}(z^c),\{c^{i_0}\})<\epsilon \text{ and } d(z_{(i_0-1)M}^d,d^{i_0})\leq d_H(F^{(i_0-1)M}(z^d),\{d^{i_0}\})<\epsilon.$$
Thus 
$$d(z_{(i_0-1)M}^c,z_{(i_0-1)M}^d)\geq d(c^{i_0},d^{i_0})-d(z_{(i_0-1)M}^c,c^{i_0})-d(z_{(i_0-1)M}^d,d^{i_0})> 3\epsilon-\epsilon-\epsilon=\epsilon.$$
By \ref{D} $D_{nM}(\overline{z}^c,\overline{z}^d)>\epsilon$. Hence if $$E_n=\{z_c;c\in A_n\}$$ we have that cardinality of $E_n$ is $2^n$ and for each $z,w\in E_n$ we have $d_n(z,w)>\epsilon$, ie, $B_n[z,\epsilon]\cap E_n=\{z\}$ for each $z\in E_n$. Therefore, $E_n$ is an $(d_n,\epsilon)$-separated set. Hence for a fixed $\epsilon>0$ we get
\begin{eqnarray*}\limsup_{n\rightarrow\infty}\frac{1}{n}\log(S_n(\epsilon))&\geq& \limsup_{n\rightarrow\infty}\frac{1}{nM}\log(S_{nM}(\epsilon)) \\ &\geq&\limsup_{n\rightarrow\infty}\frac{1}{nM}\log(2^n) \\ &=& \limsup_{n\rightarrow\infty}\frac{1}{M}\log(2)=\frac{1}{M}\log(2).
\end{eqnarray*} 
Then $h_{se}(X,F)$ is positive and we conclude the proof.

\end{proof}

Combining the theorems \ref{SSPE} and \ref{hseh} we have,

\begin{clly}\label{SSPh} Let $(X,F)$ be a topological dynamical with cardinality of $X$ greater than $2$. If $(X,F)$ has the point-wise specification property, then $h(X,F)$ is positive.
\end{clly}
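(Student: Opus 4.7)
The plan is to derive this corollary as a direct chain of two previously established results, with essentially no additional work beyond assembling the inequalities in the correct order.

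First, I would invoke Theorem \ref{SSPE}, noting that its hypotheses are precisely what is assumed here: $(X,F)$ is a topological dynamical system with $\#(X)>2$ enjoying the point-wise specification property. That theorem then yields $h_{se}(X,F)>0$. The construction inside Theorem \ref{SSPE} produced, for each $n\in\mathbb{N}$, a $(d_n,\epsilon)$-separated set of cardinality at least $2^n$ using the spacing constant $M$ furnished by point-wise specification, which gives the strict lower bound $h_{se}(X,F)\geq \tfrac{1}{M}\log 2>0$.

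Second, I would apply Theorem \ref{hseh}, which establishes the general comparison $h_{se}(X,F)\leq h(X,F)$ for any topological dynamical system $(X,F)$. Since this inequality holds unconditionally, it applies in our setting without any further verification.

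Concatenating these two facts gives
\[
h(X,F)\;\geq\; h_{se}(X,F)\;>\;0,
\]
which is exactly the conclusion of the corollary. There is no genuine obstacle here; the only mild subtlety is verifying that the cardinality hypothesis $\#(X)>2$ is carried cleanly into Theorem \ref{SSPE} (it is stated identically there), and that the topological dynamical system framework is the same in both referenced statements (it is, since $(X,F)$ upper semi-continuous with $F(x)$ compact is the standing assumption throughout Sections~\ref{s-topo} and \ref{s-sepentro}). So the proof reduces to a one-line application of Theorems~\ref{SSPE} and \ref{hseh}.
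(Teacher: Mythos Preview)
Your proposal is correct and matches the paper's own proof exactly: the corollary is obtained simply by combining Theorem~\ref{SSPE} (point-wise specification gives $h_{se}(X,F)>0$) with Theorem~\ref{hseh} ($h_{se}(X,F)\leq h(X,F)$). The only superfluous remark is your aside about upper semi-continuity, which is not actually needed for either of the two theorems invoked.
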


Combining the corollary \ref{SSPh} and \cite[Theorem 3.1, Theorem 3.5]{KT} we have,

\begin{clly} Let $(X,F)$ be a topological dynamical system with cardinality of $X$ greater than $2$. If $(X,F)$ has the point-wise specification property, then $h(\stackrel{\leftarrow}{\sigma})>0$, $h(\stackrel{\rightarrow}{\sigma})>0$ and $h(\sigma)>0$.
\end{clly}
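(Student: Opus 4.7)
The plan is to combine two already-established ingredients: the positivity of the entropy $h(X,F)$ under the point-wise specification hypothesis, and the identification of $h(X,F)$ with the topological entropies of the shift maps on the various orbit spaces. Concretely, I would proceed in two short steps.

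First, I would invoke Corollary \ref{SSPh}. Since $(X,F)$ is assumed to satisfy the point-wise specification property and $\#(X) \geq 2$, that corollary gives directly $h(X,F) > 0$. Nothing new is needed here; this is merely the combination of Theorem \ref{SSPE} (point-wise specification implies $h_{se}(X,F) > 0$) with Theorem \ref{hseh} ($h_{se}(X,F) \leq h(X,F)$), which has already been recorded.

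Second, I would apply \cite[Theorem 3.1 and Theorem 3.5]{KT} to transfer this positivity to each of the three shift maps $\stackrel{\rightarrow}{\sigma}$, $\stackrel{\leftarrow}{\sigma}$ and $\sigma$ acting on $\overrightarrow{\Orb}(X,F)$, $\overleftarrow{\Orb}(X,F)$ and $\Orb(X,F)$ respectively. The cited results state that the topological entropy of $F$ coincides with (or at least bounds from below) the topological entropy of the corresponding shift maps on these orbit spaces. Since $h(X,F)$ is strictly positive by the previous step, the same bound $h(\stackrel{\rightarrow}{\sigma}) > 0$, $h(\stackrel{\leftarrow}{\sigma}) > 0$ and $h(\sigma) > 0$ follows immediately.

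There is essentially no obstacle: the statement is a two-line consequence obtained by chaining Corollary \ref{SSPh} with the equality between $h(X,F)$ and the shift entropies established in \cite{KT}. The only thing to be careful about is checking that the hypotheses of the KT theorems (upper semi-continuity of $F$, compactness of $X$, compactness of the values $F(x)$) are already built into our definition of a topological dynamical system, so no additional assumption needs to be imposed.
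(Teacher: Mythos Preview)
Your proposal is correct and matches the paper's own argument exactly: the corollary is stated as an immediate consequence of combining Corollary~\ref{SSPh} with \cite[Theorem~3.1, Theorem~3.5]{KT}, with no further work required. Your added remark about verifying the hypotheses of the cited KT theorems is a reasonable caution, though note that the paper's definition of a topological dynamical system does not explicitly require upper semi-continuity, so the corollary is stated at the same level of generality as in \cite{KT}.
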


\begin{example} A non-trivial topological dynamical system with point-wise specification property. Let $f:\mathbb{T}^2\rightarrow\mathbb{T}^2$ the classical Anosov linear on the torus. Define $F:\mathbb{T}^2\rightarrow 2^{\mathbb{T}^2}$ by $F(x)=\{f(x)\}$ if $x$ is a periodic point for $f$, and $F(x)=\mathbb{T}^2$ if $x$ is not a periodic point for $f$. Since $f$ is an Anosov transitive homeomorfism there is the periodic specification property. Let $\epsilon>0$ be given and $M>0$ be given by periodic specification property for $f$. Let $x^1,...,x^n\in \mathbb{T}^2$ and $a_1<a_2<...<a_n$ with $a_{i+1}-a_i>M$ be given. For each $1\leq i\leq n$ take $y^i=f^{-a_i}(x^i)$. By choice of $M$ there is $z\in\mathbb{T}^2$, a periodic point of $f$, such that $d(f^{a_i}(z),f^{a_i}(y^i))<\epsilon$, for all $1\leq i\leq n$. Hence for each $1\leq i\leq n$,
\begin{eqnarray*} d_H(F^{a_i}(z),\{x^i\})=d_H(\{f^{a_i}(z)\},\{f^{a_i}(y^i)\})=d(f^{a_i}(z),f^{a_i}(y^i))<\epsilon.
\end{eqnarray*} 
Therefore $(X,F)$ has the point-wise specification property.      
\end{example}

\vspace{1cm}
\noindent
{\em W. Cordeiro}\/
 Mathematics Department,
The Pennsylvania State University,
State College, PA 16802, USA. 
\noindent E-mail:  wud11@psu.edu 
\vspace{0.2cm}

\noindent{\em M. J. Pacifico }\/
\noindent Instituto de Matem\'atica,
Universidade Federal do Rio de Janeiro,
C. P. 68.530\\ CEP 21.945-970,
Rio de Janeiro, RJ, Brazil.
\noindent E-mail:   pacifico@im.ufrj.br

\end{document}